\newtheorem{theorem}{Theorem}[section]
\newtheorem{lemma}[theorem]{Lemma}
\newtheorem{proposition}[theorem]{Proposition}
\theoremstyle{definition}
\newtheorem{definition}[theorem]{Definition}
\newtheorem{remark}[theorem]{Remark}
\newtheorem{example}[theorem]{Example}
\begin{document}

\centerline{\Large \bf Morse Theory for C*-algebras:}
\centerline{\large \bf A Geometric Interpretation of Some
Noncommutative Manifolds }

 \vline

 \centerline{{\bf Vida Milani}
 \footnote {corresponding author: Vida Milani e-mail:
 v-milani@cc.sbu.ac.ir}}

 \centerline{\small \it Dept. of Math.,
Faculty of Math. Sci., Shahid Beheshti University, Tehran, IRAN}

\centerline{\small \it School of Mathematics, Georgia Institute of
Technology, Atlanta GA, USA}

\centerline{\small \it e-mail: v-milani@cc.sbu.ac.ir}
\centerline{\small \it e-mail: vmilani3@math.gatech.edu}

\centerline{\bf Seyed M.H. Mansourbeigi}

 \centerline{\small \it Dept.
of Electrical Engineering, Polytechnic University, NY, USA}

\centerline{\small \it e-mail: s.mansourbeigi@ieee.org}

\centerline{\bf Ali Asghar Rezaei}

 \centerline{\small \it Dept. of
Math., Faculty of Math. Sci., Shahid Beheshti University, Tehran,
IRAN}

 \centerline{\small \it e-mail:
 A\_Rezaei@sbu.ac.ir}

\begin{abstract}

 The approach we present is a modification of the Morse theory for
 unital
 C*-algebras. We provide tools for the geometric interpretation of noncommutative
 CW complexes. These objects were introduced and studied in [2], [7] and [14]. Some examples
 to illustrate these geometric information in practice are given. A
 classification of unital C*-algebras by noncommutative CW complexes and the modified Morse functions on them
 is the main object of this work.
\end{abstract}

\textsl{Key words: C*-algebra, critical points, CW complexes,
homotopy equivalence, homotopy type, Morse function, NCCW complex,
poset, pseudo-homotopy type, *-representation, simplicial complex}

\emph{AMS subject class: 06B30, 46L35, 46L85, 55P15, 55U10}

\newpage

\section{Introduction}

Among the various approaches in the study of smooth manifolds by
the tools from calculus, is the Morse theory. The classical morse
theory provides a connection between the topological structure of
a manifold $M$ and the topological type of critical points of an
open dense family of functions $f: M \rightarrow \mathbb{R}$ (the
Morse functions).

 On a smooth manifold $M$, a point $a \in M$
is {\it a critical point} for a smooth function $f: M \rightarrow
\mathbb{R}$, if the induced map $f_*: T_a(M) \rightarrow
\mathbb{R}$ is zero. The real number $f(a)$ is then called {\it a
critical value}. The function $f$ is {\it a Morse function} if i)
all the critical values are distinct and ii) its critical points
are non degenerate, i.e. the Hessian matrix of second derivatives
at the critical points has a non vanishing determinant. The number
of negative eigenvalues of this Hessian matrix is {\it the index
of
f} at the critical point. The classical Morse theory states as [13]\\

{\bf Theorem }: {\it There exists a Morse function on any
differentiable manifold and any differentiable manifold is a CW
complex with a $\lambda$-cell for each critical point of index
$\lambda$}.

So once we have information around the critical points of a Morse
function on $M$, we can reconstruct $M$ by a sequence of
surgeries.\\

A C*-algebraic approach which links between operator theory and
algebraic geometry, is via a suitable set of equivalence classes
of extensions of commutative C*-algebras. This provides a functor
from locally compact spaces into abelian groups [7], [11], [14].

If $J$ and $B$ are two C*-algebras, an extension of $B$ by $J$ is
a C*-algebra $A$ together with morphisms $j: J \rightarrow A$ and
$\eta : A \rightarrow B$ such that there is an exact sequence

\begin{equation}
\begin{CD}
 0@>>> J
@>{j}>> A @>{\eta}>> B\\
\end{CD}
\end{equation}

The aim of the extension problem is the characterization of those
C*-algebras $A$ satisfying the above exact sequence. This has
something to do with algebraic topology techniques. In the
construction of a CW complex, if $X_{k-1}$ is a suitable
subcomplex, $I^k$ the unit ball and $S^{k-1}$ its boundary, then
the various solutions for the extension problem of $C(X_{k-1})$ by
$C_0(I^k-S^{k-1})$ correspond to different ways of attaching $I^k$
to $X_{k-1}$ along $S^{k-1}$, which means that in the disjoint
union $X_{k-1} \cup I^k$ we identify points $x \in S^{k-1}$ with
their image $\varphi _{k}(x)$ under some attaching map $\varphi
_{k}: S^{k-1} \rightarrow X_{k-1}$.

After the construction of noncommutative geometry [1], there have
been attempts for the formulation of classical tools of
differential geometry and topology in terms of C*-algebras (in
some sense the dualization of the notions, [3], [4], [10]. The
dual concept of CW complexes , with some regards, is the notion of
noncommutative CW complexes [7] and [14]. Our approach in this
work is the geometric study of these structures. So many works are
done on the combinatorial structures of noncommutative simplicial
complexes and their decompositions, for example [2], [6], [8],
[9]. Following these works, together with some topological
constructions , we show how a modification of the classical Morse
theory to the level of C*-algebras will provide an innovative way
to explain the geometry of noncommutative CW complexes through the
critical ideals of the modified Morse function. This leads to some
classification theory.

This paper is prepared as follows. After an introduction to the
notion of primitive spectrum of a C*-algebra, it will proceed the
topological structure in detail and present some instantiation. In
the continue we study the noncommutative CW complexes and
interpret their geometry by introducing the modified Morse
function. All these provide tools for the modified Morse theory
for C*-algebras. The last section devoted to the prove of this
theorem.
It states as\\

 {\bf Main Theorem }: {\it Every unital C*-algebra with an
 acceptable Morse function on it is of pseudo-homotopy type of a
 noncommutative CW complex, having a k-th decomposition cell
 for each critical chain of order k}.\\

\section{ The Structure of the Primitive Spectrum}

The technique we follow to link between the geometry, topology and
algebra is the primitive spectrum point of view. In fact as we
will see in our case it is a promissive candidate for the
noncommutative analogue of a topological manifold $M$. We review
some preliminaries on the primitive spectrum. Details can be found
in [5], [10], [12].

Let $M$ be a compact topological manifold and $A = C(M)$ be the
commutative unital C*-algebra of continuous functions on $M$. The
{\it primitive spectrum} of $A$ is the space of kernels of
irreducible *-representations of $A$. It is denoted by $Prim(A)$.
The topology on this space is given by the closure operation as
follows:

For any subset $X \subseteq Prim(A)$, the closure of $X$ is
defined by
\begin{equation}
{\bar X} := \{ I \in Prim(A) : \bigcap_{J \in X} J \subset I \}
\end{equation}
This operation defines a topology on $Prim(A)$ (the hull-kernel
topology), making it into a $T_0$-space.

\begin{definition}
A subset $X \subset Prim(A)$ is called {\it absorbing} if it
satisfies the following condition

\begin{equation}
I \in X , I \subseteq J \Rightarrow J \in X
\end{equation}
\end{definition}

\begin{lemma}
The closed subsets of $Prim(A)$ are exactly its absorbing subsets.
\end{lemma}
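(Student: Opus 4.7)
The plan is to prove the equivalence directly from the hull-kernel closure formula~(2), handling the two implications separately. For convenience write $k(X):=\bigcap_{J\in X}J$, so that the closure operation reads $\bar X=\{I\in Prim(A):k(X)\subseteq I\}$. For the forward direction (closed $\Rightarrow$ absorbing), suppose $X=\bar X$. Given $I\in X$ and a primitive ideal $J\supseteq I$, transitivity yields $k(X)\subseteq I\subseteq J$, so $J$ satisfies the defining condition of $\bar X$ and hence lies in $X$. This is essentially a one-line verification.

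For the reverse direction (absorbing $\Rightarrow$ closed), suppose $X$ is absorbing. The inclusion $X\subseteq\bar X$ is automatic from the definition, so I only need $\bar X\subseteq X$. Given $I\in\bar X$, the hypothesis $k(X)\subseteq I$ is at my disposal; the plan is to locate some $K_0\in X$ with $K_0\subseteq I$, after which the absorbing property of $X$ immediately forces $I\in X$ and the proof is done. To produce $K_0$, I would exploit the fact that every primitive ideal is prime, which converts the containment of an intersection of ideals in the prime ideal $I$ into the containment of a single term of that intersection.

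The main obstacle I anticipate is the gap between the two-term primeness property and the arbitrary-cardinality intersection $\bigcap_{K\in X}K\subseteq I$ when $X$ is large. I would bridge this either by exhibiting a finite subfamily of $X$ whose intersection already lies in $I$ (which is available under the combinatorial finiteness assumptions on $Prim(A)$ relevant to the paper's later applications to noncommutative CW complexes), or by packaging the argument through the Galois connection $X\mapsto h(k(X))$, $J\mapsto k(h(J))$ between subsets of $Prim(A)$ and two-sided ideals of $A$, so that the closed sets are exactly the fixed points of $h\circ k$ and these in turn can be matched against the absorbing sets. In either case the lemma reduces to unwinding the definitions once the appropriate instance of primeness has been invoked.
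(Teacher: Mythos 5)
Your forward direction (closed $\Rightarrow$ absorbing) is complete and correct, and it matches the only content the paper supplies: the paper's entire proof is ``clear from the very definition of closed sets.'' The trouble is in the reverse direction, and the obstacle you anticipate there is not a technical inconvenience to be bridged --- it is fatal to the lemma as stated. Take $A=C([0,1])$. Every primitive ideal of $A$ is maximal, so the implication $I\in X,\ I\subseteq J\Rightarrow J\in X$ is vacuously satisfied and \emph{every} subset of $Prim(A)\cong[0,1]$ is absorbing; but the hull-kernel topology on $Prim(C([0,1]))$ is the usual topology of $[0,1]$, so most absorbing sets are not closed. Concretely, with $X=\{I_x: x\in(0,1)\}$ one has $k(X)=0$ and hence $\bar X=Prim(A)$, yet no member of $X$ is contained in $I_0$, so no choice of $K_0$ exists and the absorbing property gives you nothing. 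Your proposed Galois-connection packaging does not rescue this: the fixed points of $h\circ k$ are exactly the closed sets, which in this example form a proper subfamily of the absorbing sets, so ``matching'' the two classes is precisely what fails.

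Your first proposed bridge --- reduce to a finite subfamily and invoke primeness of the primitive ideal $I$ --- is the correct diagnosis of what extra hypothesis the lemma needs. If $X$ is finite (in particular if $Prim(A)$ is replaced by the finite lattice of representatives the paper constructs immediately after this lemma), then $\bigcap_{K\in X}K\subseteq I$ with $I$ prime does yield some $K_0\in X$ with $K_0\subseteq I$, and absorption finishes the argument. So your proof is salvageable exactly in the finite setting in which the paper actually uses the statement, but not at the level of generality at which the lemma is asserted; you should state the finiteness hypothesis explicitly rather than leave it as one of two possible escape routes, since the second route does not work.
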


\begin{proof}
It is clear from the very definition of closed sets.
\end{proof}

For each $x \in M$ let
$$I_x := \{ f \in A : f(x) = 0 \}$$
$I_x$ is a closed maximal ideal of $A$. It is in fact the kernel
of the evaluation map
$$(ev)_x : A \rightarrow \mathbb{C}$$
$$f \rightsquigarrow f(x)$$

This provides a homeomorphism

\begin{equation}
I : M \rightarrow Prim(A)
\end{equation}

between $M$ and $Prim(A)$, defined by $I(x) := I_x$.

To each $I \in Prim(A)$, there corresponds an absorbing set
$$W_I := \{ J \in Prim(A) : J \supseteq I \}$$
and an open set
$$O_I := \{ J \in Prim(A) : J \subseteq I \}$$
containing $I$.

Being a $T_0$-space, $Prim(A)$ can be made into a partially
ordered set (poset) by setting for $I,J \in Prim(A)$,
$$I < J \Leftrightarrow I \subset J$$
which is equivalent to $O_I \subset O_J$ or $W_I \supset W_J$.

The topology of $Prim(A)$ can be given equivalently by means of
this partial order
$$I < J \Leftrightarrow J \in {\bar {\{ I \}}}$$
where ${\bar {\{ I \}}}$ is the closure of the one point set
$\{I\}$.

Now let $A$ be an arbitrary unital C*-algebra. Since $A$ is
unital, then $Prim(A)$ is compact. Let $Prim(A) = \bigcup_{i=1}^n
O_{I_i}$ be a finite open covering.

An equivalence relation on $Prim(A)$ is given by
$$I \sim J \Leftrightarrow J \in O_I (I \in O_J)$$

In each $O_{I_i}$ choose one $I_i$ with respect to the above
equivalence relation.

Let $I_1,I_2,...I_m$ be chosen this way so that $Prim(A)$ is made
into a finite lattice for which the points are the equivalence
classes of $[I_1],...[I_m]$. For simplicity we show each class
$[I_i]$ by its representative $I_i$. Let
$$J_{i_1,....i_k} := I_{i_1} \cap ... \cap I_{i_k}$$
where $1 \leq i_1,...i_k \leq m , 1 \leq k \leq m$

Set
$$W_{i_1,....i_k} := \{ J \in Prim(A) : J \supset J_{i_1,....i_k}
\}$$

This is a closed subset of $Prim(A)$.

\begin{definition}
 When $J_{i_1,....i_k} \neq 0$, then it is called a {\it
k-ideal} in $A$ and its corresponding closed set
$W_{i_1,....i_k}$ in $Prim(A)$ is called a ${\it k-chain}$.\\

\end{definition}

\begin{remark}
 If for some $1 \leq i_1,...i_k \leq m , 1 \leq k \leq m$, we have
$J_{i_1,....i_k} = 0$, then $W_{i_1,....i_k} = Prim(A)$. Also for
each pair of indices $(i_1,....i_k) , \sigma (i_1,....i_{k+1})$
$$W_{i_1,....i_k} \subseteq W_{\sigma(i_1,....i_{k+1})}$$
where $\sigma$ is a permutation on k+1 elements.
\end{remark}

\begin{remark}
 Let $$X_0 \subset X_1 \subset ... \subset X_n=X$$
be an n-dimensional CW complex structure for a topological space
$X$, so that $X_0$ is a finite discrete space consisting of
0-cells, and for $k=1,...,n$ each k-skeleton $X_k$ is obtained by
attaching $\lambda_k$ number of k-disks to $X_{k-1}$ via the
attaching maps
$$\varphi _k : \bigcup_{\lambda _k} S^{k-1} \rightarrow X_{k-1}$$

In other words
\begin{equation}
X_k = \frac{X_{k-1} \bigcup (\cup _{\lambda _k} I^k)}{x \sim
\varphi _k (x)} := X_{k-1} \bigcup_{\varphi _k}(\cup _{\lambda _k}
I^k)
\end{equation}
wherever $x \in S^{k-1}$, where $I^k := [0,1]^k$ and $S^{k-1} :=
\partial I^k$. The quotient map is denoted by
$$\rho :
X_{k-1} \bigcup (\cup_{\lambda_k}I^{k}) \rightarrow X_k$$

 For more details see [11].

A cell complex structure is induced on $Prim(C(X))$ by the
following procedure:

Let $A_k=C(X_k)$, $k=1,...,n$. For each 0-cell $C_0$ in $X_0$, let
$I_{C_0}$ be its image under the homeomorphism $I : X_0
\rightarrow Prim(C(X_0))$ of relation (4). By considering the
restriction of functions on $X$ to $X_0$, $I_{C_0}$s will be the
0-ideals for $A = C(X)$ and
$$W_{C_0} := \{ J \in Prim(A) : I_{C_0} \subset J \}$$
the 0-chains for $Prim(A)$.

The 1-ideals are of the form $I_{C_1} := \bigcap_{x \in C_1} I_x$
with the corresponding 1-chains
$$W_{C_1} := \{ J \in Prim (A) : I_{C_1} \subset J \}$$

In the same way the k-ideals are $I_{C_k} = \bigcap_{x \in C_k}
I_x$ for $2\leq k \leq n$, with the corresponding k-chains
$$W_{C_k} := \{ J \in Prim(A) : I_{C_k} \subset J \}$$

(An ideal in $A_{k-1}$ can be thought of as an ideal in $A_k$ by
the restriction of functions.)\\

In the following two examples we identify the k-ideals and the
k-chains for the CW complex structures of the closed interval
[0,1] and the 2-torus $S^1 \times S^1$.
\end{remark}

\begin{example}
Let $X_0 = \{ 0,1 \}$ and $X_1 = [0,1]$ be the zero and one
skeleton for a CW complex structure of [0,1]. $A_0 = C(X_0)
\simeq \mathbb{C} \oplus \mathbb{C}$ and $A_1 = C(X_1)$ and the
0-ideals $I_0$ and $I_1$ and their corresponding 0-chains $W_0$
and $W_1$ are
$$I_0 = \{ f \in A_0 : f(0) = 0 \} \simeq \mathbb{C},
I_1 = \{ f \in A_0 : f(1) = 0 \} \simeq \mathbb{C}$$

and
$$W_0 = \{J \in PrimA_0 : I_0 \subset J \} \simeq \{ 0 \},
W_1 = \{J \in PrimA_0 : I_1 \subset J \} \simeq \{ 1 \}$$

For the only 1-ideal we have $$I = I_0 \cap I_1 = 0$$ with the
corresponding 1-chain
$$W_I = \{ J \in Prim(A) : I \subset J \} = Prim(A) \simeq [0,1]$$

\end{example}

\begin{example}
 Let
$$X_0 = \{ 0 \} , X_1 = \{ \alpha , \beta \}
, X_2 = T^2 = S^1 \times S^1$$ be the skeletons for a CW complex
structure for the 2-torus $T^2$. $\alpha , \beta$ are homeomorphic
images of $S^1$ (closed curves with the origin 0). Let $A_0 =
C(X_0) = \mathbb{C}$ , $A_1 = C(X_1)$ and $A_2 = A = C(T^2)$.

The 0-ideal and its corresponding 0-chain are
$$I_0 = \{ f \in A_0 : f(0) = 0 \}$$
$$W_0 = \{ J \in Prim(A_0) : I_0 \subset J \} \simeq Prim(A_0) =
\{ 0 \}$$ Also the 1-ideals $I_1 , I_2$ and 1-chains $W_{I_1} ,
W_{I_2}$ are
$$I_1 = \{ f \in A_1 : f( \alpha ) = 0 \} = \cap_{x \in \alpha}
I_x$$
$$I_2 = \{ f \in A_1 : f( \beta ) = 0 \} = \cap_{x \in \beta}
I_x$$
$$W_{I_1} = \{ J \in Prim(A_1) : I_1 \subset J \} \simeq \alpha$$
$$W_{I_2} = \{ J \in Prim(A_1) : I_2 \subset J \} \simeq \beta$$

Finally the only 2-ideal and its corresponding 2-chain are
$$I = \{ f \in A : f(T^2) = 0 \} \simeq 0$$
$$W_I = \{ J \in Prim(A) : I \subset J \} \simeq T^2$$

\end{example}

\section{The Noncommutative CW Complexes (NCCW Complexes)}

In this section we see how the construction of the primitive
spectrum of the previous section helps us to study the
noncommutative CW complexes.\\

For a continuous map $\phi : X \rightarrow Y$ between topological
spaces $X$ and $Y$, the C*-morphism induced on their associated
C*-algebras is denoted by $C(\phi) : C(Y) \rightarrow C(X)$ and is
defined by $C(\phi)(g):= g o \phi$ for $g \in C(Y)$.\\

\begin{definition}
Let $A_1$, $A_2$ and $C$ be C*-algebras. {\it A pull back for $C$
via morphisms $\alpha_1 : A_1 \rightarrow C$ and $\alpha_2 : A_2
\rightarrow C$} is the C*-subalgebra of $A_1 \oplus A_2$ denoted
by $PB(C, \alpha_1, \alpha_2)$ defined by
$$PB(C, \alpha_1, \alpha_2):= \{a_1 \oplus a_2 \in A_1 \oplus A_2
: \alpha_1(a_1) = \alpha_2(a_2)\}$$

\end{definition}

For any C*-algebra $A$, let
$$S^nA := C(S^n \rightarrow A) , I^nA := C([0,1]^n \rightarrow A)
, I_0^nA := C_0((0,1)^n \rightarrow A)$$ where $S^n$ is the
n-dimensional unit sphere.\\

We review the definition of noncommutative CW complexes from [7],
[14].
\begin{definition}
A {\it 0-dimensional noncommutative CW complex} is any finite
dimensional C*-algebra $A_0$. Recursively an {\it n-dimensional
noncommutative CW complex (NCCW complex)} is any C*-algebra
appearing in the following diagram
\end{definition}

\begin{equation}
\begin{CD} 0@>>>{I}_0^nF_n@>>> A_n
@>{\pi}>> A_{n-1} @>>> 0\\
&&@|@VV{f_n}V@VV{\varphi_n}V\\
0@>>>{I}_0^nF_n@>>>{I}^nF_n @>{\delta}>> S^{n-1}F_n @>>> 0\end{CD}
\end{equation}

Where the rows are extensions, $A_{n-1}$ an (n-1)-dimensional
noncommutative CW complex, $F_n$ some finite dimensional
C*-algebra of dimension $\lambda_n$, $\delta$ the boundary
restriction map, $\varphi_n$ an arbitrary morphism (called the
connecting morphism), for which
\begin{equation}
A_n = PB(S^{n-1} F_n, \delta, \varphi_n):= \{ ( \alpha , \beta )
\in I^n F_n \oplus A_{n-1} : \delta ( \alpha ) = \varphi_n( \beta
) \}
\end{equation}

$f_n$ and $\pi$ are respectively projections onto the first and
second coordinates.

With these notations $\{ A_0,...,A_n \}$ is called {\it the
noncommutative CW complex decomposition of dimension $n$ for
$A=A_n$}

For each $k=0,1,...n$, $A_k$ is called {\it the k-th decomposition
cell.}

\begin{proposition}
Let $X$ be an n-dimensional CW complex containing cells of each
dimension $\leq n$. Then there exists a noncommutative CW complex
decomposition of dimension n for $A = C(X)$.

Conversely suppose $\{A_0,...,A_n\}$ be a noncommutative CW
complex decomposition of dimension n for the C*-algebra $A$ such
that $A$ and all the $A_is$ ($i=0,..,n)$ are unital. For each $k
\leq n$, let $X_k = Prim(A_k)$. Then there exists an n-dimensional
CW complex structure on $Prim(A)$ with $X_k$ as its k-skeleton for
each $k \leq n$.
\end{proposition}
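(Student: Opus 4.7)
The plan is to treat the two directions in parallel, running the same induction on the dimension $n$. For the forward direction, given the CW filtration $X_0 \subset X_1 \subset \cdots \subset X_n$, I would set $A_k := C(X_k)$ and exhibit each $A_k$ as the pullback prescribed in diagram~(6). The attaching data $\varphi_k : \bigsqcup_{\lambda_k} S^{k-1} \to X_{k-1}$ pulls back to a C*-morphism $C(\varphi_k) : A_{k-1} \to C(\bigsqcup_{\lambda_k} S^{k-1}) \cong S^{k-1} F_k$, where $F_k := \mathbb{C}^{\lambda_k}$ is the natural finite-dimensional choice. Because $X_k$ is by construction the topological pushout of $X_{k-1} \leftarrow \bigsqcup S^{k-1} \hookrightarrow \bigsqcup I^k$, and because the contravariant functor $C(-)$ converts such pushouts of compact Hausdorff spaces into pullbacks of unital C*-algebras, one obtains the identification $A_k \cong PB(S^{k-1}F_k,\delta,C(\varphi_k))$ with $\delta$ the boundary restriction. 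Verifying this identification and the commutativity of the top row of~(6) amounts to the observation that a continuous function on $X_k$ is precisely the data of a function on $X_{k-1}$ together with a function on each attached disk that matches along the boundary under $\varphi_k$.

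For the converse, I would proceed by induction on $n$, with base case $n=0$: since $A_0$ is finite-dimensional and unital, $Prim(A_0)$ is a finite discrete space, furnishing the $0$-skeleton. Assuming inductively that $Prim(A_{k-1})$ carries a CW structure whose $j$-skeleton is $Prim(A_j)$ for every $j \leq k-1$, I would use the top row of~(6), the extension $0 \to I_0^k F_k \to A_k \to A_{k-1} \to 0$, to read off the spectrum of $A_k$. The quotient $\pi : A_k \to A_{k-1}$ realises $Prim(A_{k-1})$ as a closed subspace of $Prim(A_k)$, and the open complement is canonically identified with $Prim(I_0^k F_k) \cong (0,1)^k \times Prim(F_k)$, which is a disjoint union of $\lambda_k$ open $k$-cells indexed by the irreducible representations of $F_k$. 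Thus $Prim(A_k)$ is set-theoretically $X_{k-1}$ with $\lambda_k$ open $k$-cells glued in.

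It remains to identify the attaching map, which is supplied by the pullback description: the connecting morphism $\varphi_k : A_{k-1} \to S^{k-1}F_k$ dualises to a continuous map $\Phi_k : Prim(S^{k-1}F_k) \cong \bigsqcup_{\lambda_k} S^{k-1} \to Prim(A_{k-1}) = X_{k-1}$, and this $\Phi_k$ plays the role of the CW attaching map appearing in equation~(5). Combining these identifications yields a homeomorphism $Prim(A_k) \cong X_{k-1} \cup_{\Phi_k} (\bigsqcup I^k)$, completing the induction. I expect the principal technical obstacle to lie in verifying that the hull-kernel topology on the pullback spectrum coincides with the quotient topology coming from the attaching construction; this is exactly the point at which the unitality hypothesis on every $A_i$ is used, since it guarantees that all spectra in sight are compact and that the open/closed decomposition induced by the extension is well-behaved enough to match the CW pushout topology.
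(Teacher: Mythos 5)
Your proposal follows essentially the same route as the paper: in the forward direction the paper's explicit isomorphism $\theta(f)=(f\circ\rho)_1\oplus(f\circ\rho)_2$ is precisely the witness of your pushout-to-pullback statement, and in the converse the paper likewise reads off $Prim(A_k)=Prim(A_{k-1})\cup_{\varphi_k^*}I^k$ from the connecting morphism. Your converse is somewhat more detailed than the paper's (explicit induction on $k$ and the open/closed decomposition of the spectrum induced by the extension $0\to I_0^kF_k\to A_k\to A_{k-1}\to 0$), but the underlying argument is the same.
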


\begin{proof}
Let
$$X_0 \subset X_1 \subset ... \subset X_n = X$$
be a CW complex structure for $X$ where for each $k \leq n$, $X_k$
is the k-skeleton defined in relation (5). For each $ k=0,...,n$,
let $A_k = C(X_k)$, $i : \bigcup_{\lambda_k}S^{k-1} \rightarrow
\bigcup_{\lambda_k}I^{k}$ be the injection, and $\varphi_k :
\bigcup_{\lambda_k}S^{k-1} \rightarrow X_{k-1}$ be the attaching
maps. Furthermore let $C(i)$ and $C(\varphi)$ be their induced
maps. Let $$PB : = PB(C(\bigcup_{\lambda_k}S^{k-1}), C(\varphi_k),
C(i))$$

Define
$$\theta : C(X_k) \rightarrow PB$$ by $\theta (f) = (fo \rho)_1 \oplus (fo
\rho)_2$ for $f \in C(X_k)$.

Where $(fo \rho)_1$ is the restriction of $(fo \rho)$ to
$\bigcup_{\lambda_k}I^k$ and $(fo \rho)_2$ is the restriction of
$(fo \rho)$ to $X_{k-1}$.\\

$\theta$ is well defined since $C(\varphi_k)((fo \rho)_1) =
 C(i)((fo \rho)_2)$.
Also for $(h,g) \in PB$, we have $C(\varphi_k)(h) = C(i)(g)$ and
so if $f \in C(X_k)$ be defined by $f(y) = g(y)$ for $y \in
\bigcup_{\lambda_k}I^{k}$ and $f(y) = h(y)$ for $y \in X_{k-1}$,
then $\theta (f) = (h,g)$.\\

Now the noncommutative CW complex decomposition of dimension n for
$A = C(X)$ is given by $\{A_0,...,A_n\}$.

Conversely let $A_n$ be as in (7). Let
$$\varphi_n^* : S^{n-1} \rightarrow Prim(A_{n-1})$$
be the attaching map induced by the connecting morphism
$$\varphi_n : A_{n-1} \rightarrow S^{n-1}F_n$$
of diagram (6). Then using the notation in relation (5),
$$Prim(A_n) = Prim(A_{n-1}) \cup_{\varphi_n^*} I^n$$

We note that $\varphi_n^* = C(\varphi_n)$.
 Furthermore for $k \leq n$, $\varphi_k^*(S^{k-1})$
is a closed subset of $Prim(A_{k-1})$. It is of the form
$$\varphi_k^*(S^{k-1}) = \{ J \in Prim (A_{k-1}) : I_{k-1} \subset J
\}$$ for some ideal $I_{k-1}$ in $A_{k-1}$. In fact
$$I_{k-1} = \bigcap_{J \in \varphi_k^*(S^{k-1})} J$$
\end{proof}

\begin{example}

 Following the notations of diagram (6), a
1-dimensional noncommutative CW complex decomposition for
$A=C([0,1]) = C(I)$ is given by
$$A_0 = \mathbb{C} \oplus \mathbb{C} , A_1 = C([0,1])$$

Let $F_1 = \mathbb{C}$, then
$$I_0^1F_1 = C_0((0,1)) , I^1F_1 = C([0,1]) , S^0F_1 = \mathbb{C}
\oplus \mathbb{C}$$  $\varphi_1 = id$. Also
$$C(I) =  PB(S^0F_1, \delta, \varphi_1) = \{ f \oplus ( \lambda \oplus \mu ) \in C([0,1]) \oplus
(\mathbb{C} \oplus \mathbb{C}) : f(0) = \lambda , f(1) = \mu \}$$
together with the maps

$\pi : A_1 \rightarrow A_0$ defined by $\pi (f \oplus ( \lambda
\oplus \mu )) = \lambda \oplus \mu$ and $f_1 : A_1 \rightarrow
I^1F_1 = A_1$ defined by $f_1 (f \oplus ( \lambda \oplus \mu )) =
f$ and finally $\delta : I^1F_1 = A_1 \rightarrow S^0F_1 =
\mathbb{C} \oplus \mathbb{C}$ defined by $\delta (f) = f(0) \oplus
f(1)$.
\end{example}

\section{Modified Morse Theory on C*-Algebras}

In this section, following the study of the Morse theory for the
cell complexes in [2], [6], [8], [9], with some modification, we
define the Morse function for the C*-algebras and state and prove
the modified Morse theory for the noncommutative CW complexes.
This is a classification theory in the category of C*-algebras and
noncommutative CW complexes.

\begin{definition}
Let $A$ and $B$ be C*-algebras. Two morphisms $\alpha , \beta : A
\rightarrow B$ are said to be {\it homotopic} if there exists a
family $\{H_t\}_{t \in [0,1]}$ of morphisms $H_t : A \rightarrow
B$ such that for each $a \in A$ the map $t \mapsto H_t(a)$ is a
norm continuous path in $B$ with $H_0 = \alpha$ and $H_1 = \beta$.
In this case we write $\alpha \sim \beta$.

C*-algebras $A$ and $B$ are called {\it of the same homotopy type
} if there are morphisms $\varphi : A \rightarrow B$ and $\psi : B
\rightarrow A$ such that $\varphi o \psi \sim id_{B}$ and $\psi o
\varphi \sim id_{A}$. In this case the morphisms $\varphi$ and
$\psi$ are called {\it homotopy equivalent}.
\end{definition}

\begin{definition}
Let $A$ and $B$ be unital C*-algebras. We say $A$ is {\it of
pseudo-homotopy type} of $B$ if $C(Prim(A))$ and $B$ are of the
same homotopy type.
\end{definition}

\begin{remark}
In the case of unital commutative C*-algebras, by the GNS
construction, $C(Prim(A)) = A$, [10]. So the notions of
pseudo-homotopy type and the same homotopy type are equivalent.
\end{remark}

Let $A$ be a unital C*-algebra and
$$ \Sigma = \{\{ W_{i_1,...,i_k} \}_{1 \leq k \leq n}\}_{1 \leq i_1,...,i_k \leq
n}$$ be the set of all k-chains (k=1,...,n) in $Prim(A)$.\\

\begin{lemma}
Let
 $$\Gamma = \{\{ I_{i_1,...,i_k} \}_{1 \leq k \leq n}\}_{1 \leq
i_1,...,i_k \leq n}$$
be the set of all k-ideals corresponding to
the k-chains of $\Sigma$ for k=1,...,n, then $\Gamma$ is an
absorbing set.
\end{lemma}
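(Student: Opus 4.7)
The plan is to verify the defining condition of Definition 2.1 for $\Gamma$, but first I would pause to pin down the ambient poset in which the word ``absorbing'' is meant, since the elements of $\Gamma$ are not in general primitive ideals but finite intersections $J_{i_1,\dots,i_k}=I_{i_1}\cap\cdots\cap I_{i_k}$ of the distinguished representatives $I_1,\dots,I_m$. The natural reading, consistent with the rest of the section, is that $\Gamma$ is regarded as a subfamily of the ideal lattice of $A$, and equivalently, via the order-reversing correspondence $J_{i_1,\dots,i_k}\leftrightarrow W_{i_1,\dots,i_k}$ implicit in Remark 2.4, as a family of closed subsets of $Prim(A)$. Under this identification, ``$\Gamma$ is absorbing'' should be read as: whenever $I\in\Gamma$ and $J$ is a k$'$-ideal with $I\subseteq J$, then $J\in\Gamma$; equivalently, the union $\bigcup W_{i_1,\dots,i_k}\subseteq Prim(A)$ satisfies the implication of Definition 2.1.

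With the statement clarified, the argument is short and structural. First I would take $I=J_{i_1,\dots,i_k}\in\Gamma$ and suppose $I\subseteq J$ where $J=J_{j_1,\dots,j_l}$ is another ideal of the same form. Since $\Gamma$ is defined to contain all k-ideals for $1\le k\le n$ over all allowed index tuples, the element $J$ already belongs to $\Gamma$, so the defining implication holds tautologically. Second, I would pass through the bijection $J_{i_1,\dots,i_k}\leftrightarrow W_{i_1,\dots,i_k}$ and use the compatibility $W_{i_1,\dots,i_k}\subseteq W_{\sigma(i_1,\dots,i_{k+1})}$ from Remark 2.4 to check that the associated family of closed subsets of $Prim(A)$ really is upward closed: if $K\in Prim(A)$ lies in some $W_{i_1,\dots,i_k}$, meaning $K\supseteq J_{i_1,\dots,i_k}$, then any primitive ideal $K'\supseteq K$ still contains $J_{i_1,\dots,i_k}$, hence lies in the same chain. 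By Lemma 2.2 this confirms that the union is closed, i.e.\ absorbing in the sense of Definition 2.1.

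The main obstacle, as I see it, is not a calculation but the disambiguation just described: Definition 2.1 is phrased for subsets of $Prim(A)$, while $\Gamma$ is introduced as a set of ideals, and the two are connected only by the implicit correspondence with k-chains. Once that bridge is built, the verification collapses to the observation that dropping indices from an intersection enlarges the ideal and that the enlarged ideal is again among those enumerated by $\Gamma$, so no additional content beyond the definitions and Remark 2.4 is required.
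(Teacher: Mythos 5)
Your proposal is correct and follows essentially the same route as the paper: the paper's one-line proof likewise rests on the observation that any ideal of the form $I_{i_1,\dots,i_t}$ containing a given $I_{i_1,\dots,i_k}$ is again one of the ideals enumerated by $\Gamma$ (the paper phrases this as $I_{i_1,\dots,i_k}\subset J$ forcing $J=I_{i_1,\dots,i_t}$ for some $t\le k$). Your preliminary disambiguation of the ambient poset and the supplementary check via the chains $W_{i_1,\dots,i_k}$ in $Prim(A)$ go beyond what the paper records, but the substantive step is identical.
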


\begin{proof}
This follows from the fact that for each $ I_{i_1,...,i_k}\in
\Gamma$ and for each $J \in \Gamma$, the relation $I_{i_1,...,i_k}
\subset J$ is equivalent to $J =  I_{i_1,...,i_t}$ for some $t
\leq k$ meaning $J \in \Gamma$.
\end{proof}

\begin{definition}
Let $f : \Sigma \rightarrow \mathbb{R}$ be a function. The k-chain
$W_k = W_{i_1,...,i_k}$ is called {\it a critical chain of order
k} for $f$, if for each (k+1)-chain $W_{k+1}$ containing $W_k$ and
for each (k-1)-chain $W_{k-1}$ contained in $W_k$, we have
$$f(W_{k+1}) \geq f(W_k) , f(W_{k-1}) \leq f(W_k)$$
\end{definition}

The corresponding ideal $I_k$ to $W_k$ is called {\it the critical
ideal of order k}.

\begin{definition}
 A function $f : \Sigma \rightarrow
\mathbb{R}$ is called {\it a modified Morse function} on the
C*-algebra $A$, if for each k-chain $W_k$ in $\Sigma$, there is at
most one (k+1)-chain $W_{k+1}$ containing $W_k$ and at most one
(k-1)-chain $W_{k-1}$ contained in $W_k$, such that
$$f(W_{k+1}) \leq f(W_k) , f(W_{k-1}) \geq f(W_k)$$

$f$ is called {\it acceptable Morse function} if for each $k$, if
$f$ has a critical chain of order $k$, then there exists critical
chains of order $i$ for all $i\leq k$.

\end{definition}

Now we state our main theorem. This geometric condition for a
C*-algebra to admit a noncommutative CW complex decomposition
classifies specific unital C*-algebras up to pseudo-homotopy type.

\begin{theorem}
Every unital C*-algebra $A$ with an acceptable modified Morse
function $f$ on it, is of pseudo-homotopy type of a noncommutative
CW complex having a k-th decomposition cell for each critical
chain of order k.

\end{theorem}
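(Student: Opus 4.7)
The plan is to transfer the classical Morse-theoretic idea of ``matched pair cancellation'' from the cell complex $Prim(A)$ to the level of C*-algebras, and then package the result as an NCCW decomposition via the pullback construction of Definition 3.2. Since $A$ is unital, $Prim(A)$ is compact, and the chains $\Sigma$ organize it as a finite cell complex (using Remark 2.5 and the k-chain stratification of Section 2). Applying the first half of Proposition 3.3 to this cell structure already produces one NCCW decomposition of $C(Prim(A))$ with a k-th decomposition cell for \emph{every} k-chain. The task is to reduce this down to a homotopy-equivalent NCCW complex retaining only the critical chains.

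First I would use the modified Morse function $f$ to build a discrete matching on $\Sigma$. By Definition 4.7, for each non-critical chain $W_k$, there is either a unique $(k+1)$-chain $W_{k+1}\supset W_k$ with $f(W_{k+1})\leq f(W_k)$ or a unique $(k-1)$-chain $W_{k-1}\subset W_k$ with $f(W_{k-1})\geq f(W_k)$, and the ``at most one'' clause ensures these pairings are compatible and non-overlapping. The acceptability hypothesis guarantees that the critical chains occur in every dimension $i\leq k$ whenever they occur in dimension $k$, so the reduced complex we produce will itself admit a legitimate NCCW filtration $A_0\subset A_1\subset\ldots\subset A_n$ where $A_0$ is finite-dimensional.

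Second, I would carry out the cancellations inductively, starting from the top-dimensional chains. For a matched pair $(W_{k-1}, W_k)$, the corresponding closed subsets of $Prim(A)$ admit an elementary collapse, because $W_{k-1}$ is a free face of $W_k$ in the partial order (this is precisely what the uniqueness in Definition 4.7 encodes, in the spirit of Forman's discrete Morse theory referenced via [2], [6], [8], [9]). Such a collapse is a strong deformation retraction of the subspace of $Prim(A)$ onto a proper closed subspace, which by the contravariant functoriality $X\mapsto C(X)$ and Definition 4.2 yields a homotopy equivalence of the associated commutative C*-algebras. Iterating over all matched pairs (in order of decreasing dimension so that the retractions compose) produces a subcomplex $Y\subset Prim(A)$ whose cells are in bijection with the critical chains, together with a homotopy equivalence $C(Prim(A))\sim C(Y)$.

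Third, I would reassemble $C(Y)$ as an NCCW complex by applying the first direction of Proposition 3.3 to the CW structure on $Y$: for each $k$, the attaching map $\varphi_k$ for the critical k-cells and the pullback of Definition 3.2 give us the k-th decomposition cell $A_k$, so there is exactly one k-th decomposition cell for each critical chain of order $k$. Composing with the homotopy equivalence of step two and invoking Definition 4.2 and Remark 4.3, we obtain that $A$ is of pseudo-homotopy type of the NCCW complex $\{A_0,\ldots,A_n\}$.

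The main obstacle I expect is step two: rigorously constructing the elementary collapse at the C*-level. One has to verify that the Morse matching on $\Sigma$ really does correspond to a sequence of free-face collapses in $Prim(A)$ (and not merely a combinatorial pairing), that the collapses can be ordered so each one is valid after the previous collapses, and that the resulting deformation retractions assemble into a single homotopy equivalence of morphisms in the sense of Definition 4.1. The ``at most one'' clause in the definition of a modified Morse function is delicate here, and the acceptability hypothesis is what prevents pathological situations where the matching would leave a higher-dimensional critical chain stranded without lower-dimensional critical chains to attach to.
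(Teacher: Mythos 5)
Your overall architecture matches the paper's: reduce $Prim(A)$ to a CW complex with exactly one cell for each critical chain, feed that complex into Proposition 3.3 to get an NCCW decomposition, and then invoke Definition 4.2 (pseudo-homotopy type) to conclude. The difference is entirely in the middle step. Where you propose to build a Morse matching on $\Sigma$ and carry out the elementary collapses by hand --- the step you yourself flag as the main obstacle --- the paper simply cites Forman's discrete Morse theorem as a black box: its Lemma 4.8 asserts that the collection $\Gamma$ of $k$-ideals can be substituted for the simplicial complex $\Delta$ in Forman's theorem (justified only by the observation that $\Gamma$ is absorbing), and the conclusion that $Prim(A)$ is homotopy equivalent to a CW complex $\Omega$ with one cell per critical chain falls out immediately. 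So the paper never descends to the level of free faces, collapse orderings, or deformation retractions of $Prim(A)$; it also never transfers a collapse to the C*-level --- it passes to $C(\Omega)$ only at the very end, exactly as you do in your third step. What your route buys is transparency: if completed, it would actually exhibit the homotopy equivalence and would expose the point (which you correctly identify) that one must rule out a chain being matched both up and down --- a genuine lemma in Forman's theory that the ``at most one'' clause alone does not settle, especially since Definition 4.7 uses non-strict inequalities so a chain with $f(W_{k+1})=f(W_k)$ could count both as critical and as matched. What the paper's route buys is brevity, at the cost of leaving unverified that $\Gamma$ (a poset of ideals, not literally a simplicial complex) really satisfies the hypotheses of Forman's theorem. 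Neither treatment closes that gap rigorously; yours at least names it. One small economy you could adopt from the paper: you do not need your opening move of applying Proposition 3.3 to the full chain structure of $Prim(A)$ first --- the decomposition is only ever needed for the reduced complex.
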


Before starting the proof of this theorem, we state the discrete
Morse theory of Forman from [8] and state our modification of
it.\\

{\bf Theorem (Discrete Morse Theory)}: Suppose $\Delta$ is a
simplicial complex with a discrete Morse function. Then $\Delta$
is homotopy equivalent to a CW complex with one cell of dimension
p for each critical p-simplex [8].\\

\begin{lemma}
If $f$ is an acceptable modified Morse function on $A$, then
$Prim(A)$ is homotopy equivalent to a CW complex with exactly one
cell of dimension p for each critical chain of order p.

\end{lemma}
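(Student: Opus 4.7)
The plan is to reduce the statement to Forman's discrete Morse theorem by organizing the collection $\Sigma$ of $k$-chains into an abstract simplicial complex homotopy equivalent to $Prim(A)$. First I would build a simplicial complex $\Delta$ whose $k$-simplices are the $k$-chains $W_{i_1,\dots,i_k}$ in $\Sigma$, with the face relation given by inclusion of chains (equivalently, reverse inclusion of the associated $k$-ideals $J_{i_1,\dots,i_k} = I_{i_1}\cap\dots\cap I_{i_k}$). Lemma 4.4 guarantees closure under faces, so $\Delta$ is well defined, and a $(k-1)$-chain is a face of a $k$-chain precisely when one of the indices is dropped, matching the standard combinatorial face operation.

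Next I would establish a homotopy equivalence $\Delta \simeq Prim(A)$. Starting from the finite open covering $Prim(A) = \bigcup_{i=1}^{m} O_{I_i}$ fixed in Section 2, each non-trivial intersection $O_{I_{i_1}} \cap \dots \cap O_{I_{i_k}}$ exactly corresponds to a $k$-ideal $J_{i_1,\dots,i_k} \neq 0$ and hence to a simplex of $\Delta$. Since the $O_I$-sets are basic open sets of the hull-kernel topology indexed by points of the finite poset $Prim(A)$, each non-empty such intersection is contractible, as it contains the chosen representative $I_{i_j}$ as a minimum element with respect to the induced partial order. Applying a nerve lemma to this good cover produces the required homotopy equivalence, identifying $\Delta$ as the nerve of the decomposition of $Prim(A)$ into its closed strata $W_{i_1,\dots,i_k}$.

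Third I would verify that $f : \Sigma \to \mathbb{R}$ is a discrete Morse function on $\Delta$ in Forman's sense. This is essentially a line-by-line comparison of definitions: Definition 4.6 demands that each $k$-chain $W_k$ has at most one $(k+1)$-coface $W_{k+1} \supset W_k$ with $f(W_{k+1}) \leq f(W_k)$ and at most one $(k-1)$-face $W_{k-1} \subset W_k$ with $f(W_{k-1}) \geq f(W_k)$, which is exactly Forman's condition; and the notion of critical chain of order $k$ in Definition 4.5 matches that of a critical $k$-simplex. Forman's theorem then yields a CW complex $Y$ homotopy equivalent to $\Delta$ with exactly one $p$-cell for each critical $p$-simplex, and composing $Prim(A) \simeq \Delta \simeq Y$ finishes the proof.

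The main obstacle is the nerve step: one must verify the good-cover property for the fixed open cover of $Prim(A)$ and ensure that the simplicial complex of chains faithfully captures the topology rather than just a combinatorial skeleton. If the chosen representatives $I_1,\dots,I_m$ fail to make every non-empty intersection $O_{I_{i_1}} \cap \dots \cap O_{I_{i_k}}$ contractible, or if distinct index tuples collapse to the same closed set, one would first need to refine the cover and re-index accordingly before invoking the nerve lemma. The acceptability hypothesis on $f$ is not itself required for the homotopy equivalence proved here; rather, it would be used downstream to guarantee that the cells produced by Forman's theorem assemble into a valid NCCW decomposition in the proof of the main theorem.
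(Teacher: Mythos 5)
Your reduction to Forman's discrete Morse theorem is exactly the paper's strategy: the paper's entire proof consists of the statement that one may ``substitute $\Gamma$ for the simplicial complex $\Delta$'' in the discrete Morse theorem because $\Gamma$ is absorbing (Lemma 4.4), together with the (correct, essentially definitional) observation you also make that Definitions 4.5 and 4.6 transcribe Forman's notions of critical simplex and discrete Morse function. Where you genuinely depart from the paper is your second step. The paper passes directly from ``the complex of chains is homotopy equivalent to a CW complex'' to ``$Prim(A)$ is homotopy equivalent to that CW complex'' without ever relating the topological space $Prim(A)$ to the geometric realization of the combinatorial object $\Gamma$; your nerve-lemma argument is an attempt to supply precisely that missing bridge, so in this respect your write-up is more complete and more honest than the paper's.

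That bridge, however, is where the real difficulty sits, and the caveat you raise at the end is not a formality. First, the indexing does not match: $O_{I_{i_1}} \cap \dots \cap O_{I_{i_k}}$ is the set of primitive ideals contained in every $I_{i_j}$, so its non-emptiness is neither equivalent to nor implied by $J_{i_1,\dots,i_k} = \bigcap_j I_{i_j} \neq 0$; the nerve of the cover need not coincide with the complex of chains. Second, such an intersection is the set of common lower bounds of the $I_{i_j}$ in the poset; it need not contain any $I_{i_j}$, need not have a maximum, and hence need not be contractible, so the cover is not automatically good. For a finite $T_0$ space the natural correct statement is McCord's theorem, which gives a \emph{weak} homotopy equivalence between the space and the order complex of its poset, and for finite spaces weak homotopy equivalence cannot in general be promoted to genuine homotopy equivalence --- a defect that afflicts the lemma as stated in the paper at least as much as it afflicts your proof of it. In short: your step three is the paper's argument; your step two correctly isolates the gap the paper glosses over, but the proposed repair is plausible rather than watertight, and would need either a refinement of the cover with a corresponding re-definition of $\Sigma$ and $f$, or a switch to the order-complex/weak-equivalence formulation.
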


\begin{proof}

In the discrete Morse theory it suffices to substitute $\Gamma$
 for the simplicial complex $\Delta$. Since $\Gamma$ is absorbing,
 it satisfies the properties of the simplicial complex $\Delta$
 in the discrete Morse theorem. It follows that $Prim(A)$ is
 homotopy equivalent to a CW complex with exactly one cell of
 dimension p for each critical chain of order p.
\end{proof}

Now we start the proof of the main theorem.

\begin{proof}
When $A$ is a unital C*-algebra, then the acceptable modified
Morse function on $A$ is in fact a function on the simplicial
complex of all k-ideals in $Prim(A)$ (a function on $\Gamma$).
From lemma 4.8 we conclude that $Prim(A)$ is of homotopy type of a
finite CW complex $\Omega$. From the proposition 3.3 there is a
noncommutative complex decomposition for $C(\Omega)$ making it
into a noncommutative CW complex. Now $C(Prim(A))$and $C(\Omega)$
are C*-algebras of the same homotopy type, which means $A$ is of
psuedo-homotopy type of the noncommutative CW complex $C(\Omega)$.
Furthermore since $f$ is acceptable, from the proof of proposition
3.3 it follows that when there exists a critical k-chain for $f$,
then there exists C*-algebras $A_i$ for each $i \leq k$ so that
$\{ A_0,...,A_k \}$ is a noncommutative CW complex decomposition
for $C(Prim(A))$ yielding a noncommutative CW complex
decomposition for $C(\Omega)$ .

\end{proof}

\newpage

\end{document}